\documentclass[11pt,a4paper]{article}
\usepackage{epsf,algorithm,epsfig,amsfonts,amsgen,amsmath,amstext,amsbsy,amsopn,amsthm,lineno}
\usepackage{color}
\usepackage{tikz}
\usepackage{cite}
\usepackage{enumerate}
\usepackage{float}
\usepackage{epstopdf}
\usepackage{subcaption}
\usepackage{multicol}
\usepackage{amssymb}
\usetikzlibrary{decorations.pathreplacing,calligraphy}
\usetikzlibrary{calc}

\newcommand{\cH}{\mathcal{H}}
\newcommand{\bP}{\mathbb{P}}

\newtheorem{theorem}{Theorem}

\newtheorem{lemma}{Lemma}

\newtheorem{problem}{Problem}
\newtheorem{proposition}{Proposition}

\newtheorem{claim}{Claim}

\def\final{0}  % set this to 1 to get a comment-free version
\def\iflong{\iffalse}
\ifnum\final=0  %namely if we allow comments in the output
\newcommand{\znote}[1]{{\color{red}[{\tiny \textbf{Zixuan:} \bf #1}]\marginpar{\color{red}*}}}
\newcommand{\gnote}[1]{{\color{blue}[{\tiny \textbf{Gyula:} \bf #1}]\marginpar{\color{blue}*}}}
\newcommand{\ynote}[1]{{\color{purple}[{\tiny \textbf{Yuhang:} \bf #1}]\marginpar{\color{purple}*}}}
\else % in this case [final=1] we don't want any comments to show
\newcommand{\znote}[1]{}
\newcommand{\gnote}[1]{}
\newcommand{\ynote}[1]{}
\fi

\newcounter{mathitem}

\usetikzlibrary{decorations.markings}
\tikzstyle{vertex}=[circle, draw, inner sep=0pt, minimum size=5pt]

\begin{document}

\title{\bf\Large Most probably trangle-free graphs %\thanks{This research was supported by National Key Research and Development Program of China (No.~2023YFA1010203), National Natural Science Foundation of China (Nos. ~12401464, 12271425, 12131013 and 12471334) and Shaanxi Fundamental Science Research Project for Mathematics and Physics (No. 22JSZ009).}
}
\date{}
\author{Yuhang Bai$^{a,b}$,  Gyula O.H. Katona$^{c}$, Zixuan Yang$^{a,b}$\thanks{Corresponding author.}
~\\[2mm]
\small $^{a}$School of Mathematics and Statistics, \\
\small Northwestern Polytechnical University, Xi'an, Shaanxi, P.R. China\\
\small $^{b}$Xi'an-Budapest Joint Research Center for Combinatorics, \\
\small Northwestern Polytechnical University, Xi'an, Shaanxi, P.R. China\\
\small $^{c}$HUN-REN Alfréd Rényi Institute of Mathematics, \\
\small Hungarian Academy of Sciences, Budapest, Hungary\\
}
\maketitle

\begin{abstract}

The celebrated Mantel's theorem states that any  triangle-free graph on $n$ vertices contains  at most $\left\lfloor n^2/4\right\rfloor$ edges.  It is natural  to ask how many triangles must exist in a graph with more than $\left\lfloor n^2/4\right\rfloor$ edges--a problem known as the Erd\H{o}s-Rademacher problem. In this paper, we propose a probabilistic variant of this classic problem. Specifically, given an $n$-vertex graph $G$ with  $\left\lfloor n^2/4\right\rfloor+i$ ($i>0$) edges, we choose the edges of $G$ independently  with probability $p$, and the resulting new graph is triangle-free with a certain probability. Our goal is to maximize this probability by choosing  $G$ appropriately.  For the case where $G$ has $ \left\lfloor n^2/4\right\rfloor +1$ edges, we determine the exact  maximum probability.

%A graph is called $F$-free if it does not contain the graph $F$ as a subgraph.  In 1907, Mantel established a classic  result in extremal graph theory: every triangle-free graph $G$ on $n$ vertices satisfies $|E(G)|\le \left\lfloor n^2/4\right\rfloor$, where $E(G)$ denotes the edge set of $G$. Suppose that $|E(G)|= \left\lfloor n^2/4\right\rfloor +i$. We randomly select edges of $G$ independently each with probability $p$, and the resulting new graph is triangle-free with a certain probability. Our goal is to maximize this probability by choosing  $G$ appropriately. In this paper, we determine the exact maximum  for  a triangle-free graph with $|E(G)|= \left\lfloor n^2/4\right\rfloor +1$.  

\medskip
\noindent {\bf Keywords:} Erd\H{o}s-Rademacher problem; probabilistic supersaturation; triangle-free graph
\medskip

\noindent {\bf 2020 MSC Codes:} 05C35; 05D40; 05C80
\smallskip
\end{abstract}

%%%%%%%%%%%%%%%%%%%%%%%%%%%%%%%%%%%%%%%%%%%%%%%%%%%%%%%%

\setcounter{footnote}{0}
\renewcommand{\thefootnote}{}
\footnotetext{E-mail addresses:  {\tt yhbai@mail.nwpu.edu.cn (Y. Bai), katona.gyula.oh@renyi.hu (G. Katona), yangzixuan@nwpu.edu.cn (Z. Yang)}}

%%%%%%%%%%%%%%%%%%%%%%%%%%%%%%%%%%%%%%%%%%%%%%%%%%%%
\section{Introduction}
%%%%%%%%%%%%%%%%%%%%%%%%%%%%%%%%%%%%%%%%%%%%%%%%%%%%

In extremal graph theory, an important series of problems investigate the maximum number of edges in a graph under a given set of constraints.  A graph is called \emph{$F$-free} if it does not contain the graph $F$ as a subgraph. The most well-known result in extremal graph theory is the following theorem obtained by  Mantel \cite{Man} in 1907.  This fundamental theorem has since inspired a great number of extensions and variations. 

\begin{theorem}[Mantel, \cite{Man}]\label{Man}
Let $G$ be an $n$-vertex triangle-free graph. Then  $G$ has at most $\left\lfloor n^2/4\right\rfloor$ edges.
\end{theorem}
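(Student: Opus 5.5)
The plan is to prove Theorem~\ref{Man} by double counting over the edges, combined with the Cauchy--Schwarz inequality. Let $G$ be an $n$-vertex triangle-free graph with $m$ edges, and write $d(v)$ for the degree of a vertex $v$.

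The key observation is that for every edge $uv$ the neighbourhoods $N(u)$ and $N(v)$ are disjoint: a common neighbour $w$ would give a triangle $uvw$. Hence
\[
d(u)+d(v)\le n \qquad\text{for every edge } uv\in E(G).
\]
Summing this over all edges, each vertex $v$ is counted once for each edge incident to it, so $v$ contributes $d(v)^2$ to the left-hand side. This gives
\[
\sum_{v\in V(G)} d(v)^2=\sum_{uv\in E(G)}\bigl(d(u)+d(v)\bigr)\le mn .
\]
Next we apply Cauchy--Schwarz together with the handshake identity $\sum_v d(v)=2m$:
\[
\frac{(2m)^2}{n}\le \sum_{v} d(v)^2 \le mn .
\]
This yields $4m^2\le mn^2$, so $m\le n^2/4$, and since $m$ is an integer, $m\le \lfloor n^2/4\rfloor$. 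The case $m=0$ is trivial.

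No step is a real obstacle. The only points needing care are the double-counting identity (each vertex appears in exactly $d(v)$ of the summands) and the final integrality step.

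An alternative route, if a structural argument is preferred, is as follows. Take a vertex $v$ of maximum degree $\Delta$. Its neighbourhood $N(v)$ is an independent set, so every edge meets $V\setminus N(v)$. Therefore
\[
m\le \sum_{u\notin N(v)} d(u)\le (n-\Delta)\Delta\le \lfloor n^2/4\rfloor .
\]
This version also immediately shows that $K_{\lfloor n/2\rfloor,\lceil n/2\rceil}$ attains the bound.
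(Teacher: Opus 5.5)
Your proof is correct. There is no proof in the paper to compare it with: Mantel's theorem is only quoted from \cite{Man} as background, to fix the threshold $\lfloor n^2/4\rfloor$. Within the paper's logic it is also subsumed by the cited Lov\'asz--Simonovits bound, Theorem~\ref{lem:count} with $i=1$, for $n\ge 2$.

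Both of your arguments are standard and complete. The first combines three ingredients to get $4m^2\le mn^2$:
the bound $d(u)+d(v)\le n$ on each edge, which follows from $N(u)\cap N(v)=\emptyset$;
the identity $\sum_{uv\in E(G)}(d(u)+d(v))=\sum_v d(v)^2$;
and Cauchy--Schwarz.
The second uses independence of $N(v)$ to get $m\le(n-\Delta)\Delta$, an integer at most $n^2/4$.

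One small correction: neither argument \emph{shows} that $K_{\lfloor n/2\rfloor,\lceil n/2\rceil}$ attains the bound. That is a direct edge count, and it is not part of the statement anyway.

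What the second argument genuinely gives you over the first is the equality case. If $m=\lfloor n^2/4\rfloor$, then no edge lies inside $V\setminus N(v)$ and every vertex there has degree $\Delta=|N(v)|$. This forces $G=K_{\Delta,n-\Delta}$ with $\Delta\in\{\lfloor n/2\rfloor,\lceil n/2\rceil\}$. By contrast, for odd $n$ the Cauchy--Schwarz chain has slack at $m=\lfloor n^2/4\rfloor$ and yields no structural information.
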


A  trend in extremal combinatorics is to study the supersaturation extension of
classic results. This problem, sometimes referred to as the Erd\H{o}s–Rademacher problem,
asks for the number of forbidden substructures that must appear in a configuration larger
than the extremal threshold.  The first such line of research extended Mantel’s theorem by Rademacher  (an unpublished result from 1941, see \cite{Erd3}), who showed that any graph with $\left\lfloor n^2/4\right\rfloor+1$ edges must contain at least $\left\lfloor n/2 \right\rfloor$ triangles. Determining the number of triangles in graphs of higher edge density attracted a great deal of attention, starting with the works of Erd\H{o}s \cite{Erd1,Erd2}.  Lov\'{a}sz and Simonovits \cite{Lov} further advanced this work in 1983, establishing  the following result. This research direction culminated in an asymptotic solution by Razborov \cite{Raz} in 2008, and its exact solution was established by Liu, Pikhurko, and Staden \cite{Liu} in 2020.

\begin{theorem}[Lovász and Simonovits, \cite{Lov}]\label{lem:count}
Let  $G$ be a graph with  $n$ vertices and  $\left\lfloor n^2/4 \right\rfloor + i$ edges, where $i \le n/2$. Then $G$  contains at least $i\left\lfloor n/2\right\rfloor$ triangles.
\end{theorem}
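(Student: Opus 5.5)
Our plan for Theorem~\ref{lem:count} is induction on $n$, removing a suitable pair of vertices, combined with a standard edge-triangle counting identity. Write $e(G)=\lfloor n^2/4\rfloor+i$ with $1\le i\le n/2$, and let $t(G)$ be the number of triangles. For an edge $xy$ let $d(xy)=|N(x)\cap N(y)|$ be the number of triangles through $xy$. Two basic facts will be used throughout:
\[
d(xy)\ge d(x)+d(y)-n \qquad\text{and}\qquad \sum_{xy\in E}(d(x)+d(y))=\sum_v d(v)^2\ge \frac{4e(G)^2}{n}.
\]
Summing the first over all edges gives $3t(G)\ge \frac{4e^2}{n}-ne$, which is only a weak bound when $e$ is close to $n^2/4$. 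So the exact bound has to come from a sharper, structural argument rather than from averaging alone.

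\textbf{Step 1 (an edge in many triangles).} First we show that $G$ contains an edge $xy$ with $d(xy)\ge \lfloor n/2\rfloor$; in fact we aim to show that $d(x)+d(y)\ge n+\lfloor n/2\rfloor$ for some edge. The averaging above gives an edge with $d(x)+d(y)\ge 4e/n> n$. This is enough to find a triangle, but not the full $\lfloor n/2\rfloor$.

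To get the sharper bound we argue by contradiction. Suppose every edge $xy$ lies in fewer than $\lfloor n/2\rfloor$ triangles. Then we can apply the known stability-type argument: take a vertex $v$ of maximum degree, so that $N(v)$ spans few edges. Comparing $G$ with the complete bipartite graph with parts $N(v)$ and $V\setminus N(v)$ then shows $e(G)\le \lfloor n^2/4\rfloor$, a contradiction. We expect to have to redo this classical Erdős-type computation carefully, with parity cases for $n$ even and $n$ odd.

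\textbf{Step 2 (induction).} Next, delete the two vertices $x,y$ found in Step~1. The number of edges lost is $d(x)+d(y)-1$. The aim is to choose the pair so that the remaining graph $G'$ on $n-2$ vertices has at least $\lfloor (n-2)^2/4\rfloor+i'$ edges for a suitable $i'$. Note that $\lfloor n^2/4\rfloor-\lfloor (n-2)^2/4\rfloor=n-1$.

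If $d(x)+d(y)\le n$, then $G'$ keeps excess $i'\ge i$ (up to the exact bookkeeping). In that case the triangles through $x$ or $y$, at least $d(xy)$ of them, plus the triangles of $G'$ counted by induction, should sum to $i\lfloor n/2\rfloor$. The difficulty is that $\lfloor (n-2)/2\rfloor$ is smaller than $\lfloor n/2\rfloor$, so the induction loses $i$ triangles. These must be recovered from the triangles containing exactly one of $x,y$. Such a triangle arises for each edge $uw$ inside $N(x)$, and we would count these using $d(u)+d(w)>n$ for many edges $uw$.

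\textbf{Main obstacle.} The hardest point is precisely this bookkeeping: the clean induction only gives roughly $i(\lfloor n/2\rfloor-1)$ plus the contribution of the removed edge. It works only if $i\le n/2$ is used to guarantee that the excess is not destroyed.

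An alternative we would try first is a direct argument. Take a max cut $(A,B)$, so $G$ has at least $i$ edges inside the parts beyond what a bipartite graph allows, or misses some cross edges. Each internal edge $uv\in A$ is in $|N(u)\cap N(v)\cap B|\ge |B|-(\text{missing cross edges at }u,v)$ triangles. With $m$ missing cross edges and $i+m$ internal edges, and since each missing edge spoils at most $i+m$ triangle counts, we get
\[
t\ge (i+m)\lfloor n/2\rfloor-\text{(losses)}.
\]
Controlling the losses via max-cut optimality (each vertex has at least as many neighbours across as within) and $i\le n/2$ is the crux.
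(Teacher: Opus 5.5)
\textbf{The statement as quoted is false at the endpoint $i=n/2$.} The octahedron $K_{2,2,2}$ has $n=6$ vertices and $12=\lfloor 36/4\rfloor+3$ edges, but only $8<3\cdot 3$ triangles. More generally, for every even $n\ge 6$, take $K_{n/2-1,\,n/2+1}$ and add a cycle on the $n/2+1$ vertices of the larger part. This graph has $\lfloor n^2/4\rfloor+n/2$ edges and only $n^2/4-1$ triangles. Lov\'asz and Simonovits proved the bound for $i<n/2$, and that range is sharp. The paper does not prove this theorem; it cites it and only uses $i=1$ (Rademacher's theorem), so its main result is unaffected. Your plan, however, is set up for all $1\le i\le n/2$, so it cannot be completed as stated. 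Any correct version must use $i<n/2$ strictly somewhere.

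\textbf{Step 1 is false, and the later steps do not close the gap.} A graph with $\lfloor n^2/4\rfloor+1$ edges need not contain an edge in $\lfloor n/2\rfloor$ triangles, let alone one with $d(x)+d(y)\ge n+\lfloor n/2\rfloor$. Delete any two edges of $K_{2,2,2}$: this leaves $10=\lfloor 36/4\rfloor+1$ edges, every edge lies in at most $2<3$ triangles, and $d(x)+d(y)\le 8<9$. For large $n$, consider a blow-up of the triangular prism with classes of size $n/6+1$ on one triangle and $n/6-1$ on the other. It has $n^2/4+3$ edges, yet every edge lies in at most $n/6+1$ triangles; in general only about $n/6$ can be guaranteed.

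The Mantel-type comparison with the bipartite graph between $N(v)$ and $V\setminus N(v)$ needs $N(v)$ to be independent. Your hypothesis only says each vertex of $N(v)$ has fewer than $\lfloor n/2\rfloor$ neighbours inside $N(v)$, which still permits quadratically many edges there.

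Step 2 treats only the case $d(x)+d(y)\le n$, which is the opposite of what Step 1 was meant to deliver. In the complementary case, $G-x-y$ has excess $i+n-d(x)-d(y)$, which can be nonpositive, and you do not address it. Even in the case you treat, you acknowledge losing about $i$ triangles in the induction without showing how to recover them. Also, the new excess $i+n-d(x)-d(y)$ may exceed $(n-2)/2$, which takes you outside the range of the induction hypothesis.

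The max-cut alternative likewise stops exactly at the step you call the crux. So no complete argument is given; what is missing is essentially the whole content of the Lov\'asz--Simonovits theorem.
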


In this paper, we introduce a probabilistic measure of supersaturation for graphs, in parallel with  Katona, Katona, and Katona's results on  probabilistic supersaturation  for intersecting families \cite{Kat}.  
Rather than minimizing the total number of triangles in large graphs, we seek to maximize the probability of a random subgraph being triangle-free. More specifically speaking, for an $n$-vertex graph $G$, we choose the edges of $G$ independently with probability $p$ ($0<p<1$) and delete them with probability $1-p$.  Let $G_p$ denote the random graph  obtained in this way, and let $\bP(\cdot)$ represent the probability of an event. We want to maximize the probability of the event that $G_p$ is triangle-free, for graphs $G$ of given the number of edges. 

Clearly, if a graph $G$ is triangle-free, then $G_p$ must also be triangle-free, and hence one should
take a triangle-free graph if possible. Thus, analogous to the supersaturation extension of Mantel’s theorem \cite{Man}, our focus is on determining the maximum probability that a graph exceeding the extremal bound remains triangle-free.

Notably this probabilistic problem is in fact stronger than the counting version described before. 
Indeed, by conditioning on the number of edges in $G_p$, we  apply the law of total probability to obtain that

\begin{align}\label{tf}
\mathbb{P}(G_p \text{ is triangle-free}) &= \sum_{k=0}^m \mathbb{P}(G_p \text{ is triangle-free} \mid |E(G_p)|=k) \mathbb{P}(|E(G_p)|=k) \notag\\
&= \sum_{k=0}^m \bigg(\text{tf}(G,k)/\binom{m}{k}\bigg)\bigg(\binom{m}{k}p^k (1-p)^{m-k}\bigg)\notag\\
&= \sum_{k=0}^m \text{tf}(G,k) p^k (1-p)^{m-k},
\end{align}
where $m$ is the number of edges of $G$, $\text{tf}(G,k)$ denotes the number of triangle-free subgraphs  with $k$ edges of $G$, and $t(G)$ denotes the number of triangles in $G$.
Note that $\text{tf}(G,0) =1$, $\text{tf}(G, k) = \binom{m}{k}$ for $k\in \{1,2\}$, and $\text{tf}(G,k) < \binom{m}{k}$ for all $k \ge 3$ and $t(G)\neq 0$ since any triangle in $G$ corresponds to a $3$-edge subset that is not triangle-free.
If we take $p = o(1/m^3)$, then $m^3 p^3 = o(p^2)$, which implies that higher-order terms ($k \ge 4$) can be neglected. So expanding the first few terms of the sum on the right-hand side of (\ref{tf}) gives
\begin{align*}
\mathbb{P}(G_p \text{ is triangle-free}) =& (1-p)^m + m p (1-p)^{m-1} + \binom{m}{2} p^2 (1-p)^{m-2} \\
&- t(G) p^3 (1-p)^{m-3} + o(p^2).
\end{align*}
This quantity is maximized if and only if the number of triangles in $G$ is minimized, which is precisely the objective of the counting supersaturation problem.
Hence a solution to the probabilistic problem for all values of $p$ provides a solution to the counting problem as well.

Define 
$$
\Phi_p(G)=\mathbb P(G_p ~\text{is triangle-free}),
$$
and
$$
T(i)=\max\{\Phi_p(G): |V(G)|=n, |E(G)|=\left\lfloor\frac{n^2}{4}\right\rfloor+i, i>0\}.
$$
The graphs attaining this maximum can be called \emph{the most probably triangle-free graphs}.
Determining the value of $T(1)$ is our main result in this paper.

\begin{theorem}\label{thm:triangle}
Let $n\ge 3$ be an integer, and let $p\in(0,1)$. Then
\begin{align*}
T(1)=1-p+p(1-p^2)^{\left\lfloor \frac{n}{2}\right\rfloor}
\end{align*}
with equality for the graphs obtained by adding a single edge within   the partition  of size $\left\lceil n/2\right\rceil$ of the complete  bipartite graph $K_{\left\lceil \frac{n}{2}\right\rceil,\left\lfloor \frac{n}{2}\right\rfloor}$.
\end{theorem}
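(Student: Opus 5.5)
\emph{Lower bound (construction).} Let $G_0$ be $K_{\lceil n/2\rceil,\lfloor n/2\rfloor}$ with parts $A,B$, plus one edge $uv$ inside $A$. Then $G_0$ has $\lfloor n^2/4\rfloor+1$ edges. Its triangles are exactly $uvw$ with $w\in B$, so there are $\lfloor n/2\rfloor$ of them. The random graph $(G_0)_p$ is triangle-free if and only if either $uv$ is absent, or $uv$ is present and for no $w\in B$ are both $uw,vw$ present. The pairs $\{uw,vw\}$, $w\in B$, are disjoint and do not contain $uv$, so by independence
$$\Phi_p(G_0)=1-p+p(1-p^2)^{\lfloor n/2\rfloor}.$$
This gives $T(1)\ge 1-p+p(1-p^2)^{\lfloor n/2\rfloor}$.

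\emph{Upper bound, first attempt.} The most direct route would be to find, in an arbitrary $G$ with $\lfloor n^2/4\rfloor+1$ edges, an edge $e=uv$ and $k=\lfloor n/2\rfloor$ vertices $w_1,\dots,w_k$ with every $uvw_j$ a triangle. The pairs $\{uw_j,vw_j\}$ are automatically pairwise disjoint. Triangle-freeness of $G_p$ then implies ``$e$ absent, or $e$ present and no pair fully present'', and the same computation as above gives the bound. However, an edge in $\lfloor n/2\rfloor$ triangles need not exist (the book-size results only give about $n/6$). So I would use this only as the ideal case and handle the general case by induction on $n$, mimicking the classical inductive proof of Rademacher's theorem.

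\emph{Inductive step.} Take a triangle $xyz$ in $G$, which exists by Theorem~\ref{Man}. Since $G$ has more than $n^2/4$ edges, the standard averaging argument gives an edge, say $xy$, with $d(x)+d(y)\le n+1$ (if $d(x)+d(y)\ge n+2$ for all three edges of a suitable triangle, one gets a contradiction with the edge count, as in Rademacher's proof). Then $G'=G-\{x,y\}$ has at least $\lfloor (n-2)^2/4\rfloor+1$ edges. If it has more, delete edges of $G'$ until exactly one extra edge remains; deleting edges only increases $\Phi_p$, because triangle-freeness is a decreasing event.

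Next I would condition on the edges of $G_p$ inside $V(G')$, and bound the conditional probability that the edges at $x$ and $y$ create no triangle. The aim is a recursive inequality of the form
$$\Phi_p(G)\le (1-p)+ (1-p^2)\bigl(\Phi_p(G')-(1-p)\bigr)\quad\text{or similar},$$
which unrolls to $1-p+p(1-p^2)^{\lfloor n/2\rfloor}$. Informally, $x$ and $y$ together should contribute one more ``forbidden pair'' to the triangle-blocking structure around the special edge. To get the factor $(1-p^2)$, I would split on whether the surviving graph inside $G'$ contains the witnessing structure. The base cases $n=3,4$ are checked directly.

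\emph{Main obstacle.} The hardest step is this recursive inequality. The triangles created by $x,y$ need not interact with the ``extra'' edge of $G'$ in the way they do in the extremal graph. I would first try to prove it using Harris' inequality: both the triangle-freeness of $G'_p$ and the event that the edges at $\{x,y\}$ avoid a triangle with $xy$ (or with the vertex $z$) are decreasing, so they are positively correlated. Positive correlation goes in the wrong direction for an upper bound, so that alone is not enough. I would therefore instead try an explicit coupling: when $d(x)+d(y)\le n+1$ and $xy$ lies in $t$ triangles, the remaining edges at $x,y$ form at most $n-1-t$ edges toward $V(G')$, and I would try to show that the worst case is exactly $t=1$, reproducing the extremal structure. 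If this inequality fails in some degenerate configuration, I would fall back to a case analysis according to whether $G$ contains an edge lying in at least $\lfloor n/2\rfloor$ triangles (the ideal case above) or not. In the second case I would use Theorem~\ref{lem:count} to find many edge-disjoint triangles, and show that $\Phi_p$ is then strictly smaller than the target.
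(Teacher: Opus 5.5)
Your lower-bound construction and its computation are correct and coincide with the paper's. The upper bound, however, is not proved.

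\textbf{The upper bound has a genuine gap.} Everything rests on the recursive inequality $\Phi_p(G)\le (1-p)+(1-p^2)\bigl(\Phi_p(G')-(1-p)\bigr)$. You state it only as an aim (``or similar'') and never establish it. The fallback is equally unsupported: Theorem~\ref{lem:count} gives a number of triangles, not edge-disjoint ones, and no bound on $\Phi_p$ is derived from it.

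Moreover, the reduction to $G'=G-\{x,y\}$ fails as set up:
\begin{itemize}
\item From $d(x)+d(y)\le n+1$ you only get $e(G')\ge\lfloor n^2/4\rfloor+1-n=\lfloor (n-2)^2/4\rfloor$, with no extra edge. You need $d(x)+d(y)\le n$.
\item The complementary case is not a contradiction with the edge count. In Rademacher's proof it is handled by directly counting the triangles that share an edge with $xyz$. For example, with $n=2k\ge 6$, the graph $K_{k-3,k+3}$ with a $K_5$ inside the larger part has $n^2/4+1$ edges, and every edge lying in a triangle has degree sum at least $n+2$.
\item The extremal graphs defeat the scheme either way. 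In $K_{\lceil n/2\rceil,\lfloor n/2\rfloor}+uv$, every edge of every triangle has degree sum $n+1$ or $n+2$. With the correct threshold $n$, these graphs always land in the case where you have no probabilistic argument. With your threshold $n+1$, deleting $u$ and a vertex $w$ of the other part leaves a complete bipartite graph, so no induction hypothesis applies.
\end{itemize}

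\textbf{The missing idea.} Harris's inequality does go in the right direction if you apply it to the \emph{subtracted} term. Pass to the $3$-uniform hypergraph $\cH$ on $E(G)$ whose hyperedges are the triangles of $G$. It is linear, since two triangles share at most one edge. The paper proves by induction on $r=|E(\cH)|$ that a $p$-random set $S$ is independent with probability at most $1-p+p(1-p^2)^r$.

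For the inductive step, remove a hyperedge $e_0$. Let $A$ be the event that $S$ is independent in $\cH-\{e_0\}$, and $B$ the event $e_0\subseteq S$. Then $\mathbb P(A\cap\overline{B})=\mathbb P(A)-p^3\,\mathbb P(A\mid B)$. Conditioned on $B$, the events $\{e\not\subseteq S\}$ remain decreasing in the free coordinates. By linearity each has conditional probability at least $1-p^2$. So Harris gives $\mathbb P(A\mid B)\ge(1-p^2)^{r-1}$, a lower bound that enters with a minus sign.

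This yields $\Phi_p(G)\le 1-p+p(1-p^2)^{t(G)}$ for every graph. Rademacher's bound $t(G)\ge\lfloor n/2\rfloor$ (Theorem~\ref{lem:count} with $i=1$) then finishes the proof. There is no need to redo Rademacher's induction probabilistically.
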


%%%%%%%%%%%%%%%%%%%%%%%%%%%%%%%%%%%%%%%%%%%%%%%%%%%%
\section{Preliminaries}
%%%%%%%%%%%%%%%%%%%%%%%%%%%%%%%%%%%%%%%%%%%%%%%%%%%%

In this section, we present some basic definitions and notation in hypergraphs and probability, and introduce the Fortuin-Kasteleyn-Ginibre (FKG) inequality \cite{For} required for our subsequent proofs.

A \emph{hypergraph} $\mathcal{H}$ is a pair $\mathcal{H} = (V , E)$, where $V:=V (\mathcal{H})$ is a set of vertices and $E:=E(\mathcal{H})$ is a set of nonempty subsets of $V$. A hypergraph is \emph{k-uniform} if $E \subseteq $$V \choose{ k}$, where ${V \choose k}:= \{ T \subseteq V : | T| = k\}$. In particular, $2$-uniform hypergraphs are  graphs, which we refer to as such for simplicity. An \emph{(weakly) independent set} of a hypergraph $\mathcal{H}$ is a subset of $V(\mathcal{H})$ that contain no edges. A $k$-uniform hypergraph is \emph{linear} if any two edges intersect in at most one vertex.
For $e\in  E(\mathcal{H})$, let $\mathcal{H}-\{e\}$ denote the $r$-graph obtained from $\mathcal{H}$ by removing the edge $e$.

Let $\Omega=\{0,1\}^n$ and $p \in (0, 1)$. We sample $x=(x_1,\dots,x_n)\in\Omega$ by choosing each coordinate
$x_i\in\{0,1\}$ independently, with $\bP(x_i=1)=p$, $\bP(x_i=0)=1-p$.
Equivalently, the induced product measure on $\Omega$ is $\bP(x)=\prod_{i=1}^n p^{x_i}(1-p)^{1-x_i}$.
This probability space is called the \emph{independent Bernoulli space $\{0,1\}^n$}.
A partial order on the elements of  $\Omega$ is defined as follow:
\[
(x_1, x_2, \dots, x_n) \geq (y_1, y_2, \dots, y_n)  \quad \text{if and only if} \quad x_i \geq y_i \quad \text{for all}  \quad1 \leq i \leq n.
\]
Let $A \subseteq \Omega$ be an event. 
We say that $A$ is \emph{decreasing} if $(x_1, x_2, \dots, x_n) \in A$ and  $ (y_1, y_2, \dots, y_n) \leq (x_1, x_2, \dots, x_n)$ implies that $ (y_1, y_2, \dots, y_n) \in A$. By the  definition of decreasing event, one can get the following result.

\begin{proposition}\label{prop}
Let $\{A_1,A_2,\ldots,A_m\}$ be a finite collection of decreasing events in $\Omega$. Then the intersection
\[
A = \bigcap_{i\in [m]} A_i
\]
is also a decreasing event.
\end{proposition}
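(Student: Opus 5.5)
The statement to prove is Proposition~\ref{prop}: the intersection of finitely many decreasing events is decreasing. The plan is to verify the definition of a decreasing event directly for $A=\bigcap_{i\in[m]}A_i$, using nothing beyond the definitions and the partial order on $\Omega$.

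Take $x=(x_1,\dots,x_n)\in A$ and any $y=(y_1,\dots,y_n)\in\Omega$ with $y\le x$ in the coordinatewise partial order. The goal is to show $y\in A$.

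Since $x\in A$, we have $x\in A_i$ for every $i\in[m]$. Each $A_i$ is decreasing and $y\le x$, so $y\in A_i$ for every $i\in[m]$. Therefore $y\in\bigcap_{i\in[m]}A_i=A$, and $A$ is decreasing.

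If $m=0$ is allowed, the intersection is read as $\Omega$ itself, which is trivially decreasing. Finiteness of the family is never used, so the same argument works for arbitrary intersections.

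There is no real obstacle here; the only point needing care is to apply the decreasing property of each $A_i$ separately to the same pair $y\le x$.
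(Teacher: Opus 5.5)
Your proof is correct and matches the paper's approach: the paper gives no written proof, saying only that the proposition follows from the definition of a decreasing event, and your argument is exactly that check. Fix $y\le x$ with $x\in A$, then apply the decreasing property of each $A_i$ to the same pair $(x,y)$.
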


Our proof uses a special case of  Fortuin-Kasteleyn-Ginibre (FKG) inequality \cite{For}, dating back to Harris \cite{Har}.  The original version of the theorem below is stated for two decreasing events, however, by Proposition \ref{prop}, it can be shown that the Harris's inequality \cite{Har} holds in the following more general form.

\begin{theorem}[Harris, \cite{Har}]\label{Har-ineq}
    Let $A_1, A_2,\ldots, A_m \subseteq \Omega$ be events in the independent Bernoulli space $\{0,1\}^n$.
    If $A_1,A_2,\ldots,A_m$  are all decreasing, then
    \[
    \mathbb{P}(\bigcap_{i\in [m]}A_i ) \geq \prod_{i\in [m]}\mathbb{P}(A_i).
    \]
\end{theorem}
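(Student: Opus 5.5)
We prove Theorem \ref{Har-ineq} in two stages. First we prove the case of two events: if $A,B\subseteq\Omega$ are both decreasing, then $\bP(A\cap B)\ge \bP(A)\bP(B)$. Then we extend to $m$ events by induction on $m$, using Proposition \ref{prop}.

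For two events we induct on the dimension $n$ of the Bernoulli space.

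\textbf{Base case $n=1$.} Here $\Omega=\{0,1\}$, and the only decreasing events are $\emptyset$, $\{0\}$ and $\{0,1\}$. These form a chain under inclusion, so one of $A,B$ contains the other. Say $A\subseteq B$. Then
\[
\bP(A\cap B)=\bP(A)\ge \bP(A)\bP(B).
\]

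\textbf{Inductive step.} Assume the statement holds in dimension $n-1$. For $j\in\{0,1\}$ define the sections
\[
A_j=\{y\in\{0,1\}^{n-1}:(y,j)\in A\},
\]
and define $B_j$ in the same way. These sections have three properties.
\begin{itemize}
\item Each section is decreasing in $\{0,1\}^{n-1}$.
\item Since $A$ is decreasing, $A_1\subseteq A_0$, and likewise $B_1\subseteq B_0$.
\item Put $a_j=\bP(A_j)$ and $b_j=\bP(B_j)$. The inclusions give $a_1\le a_0$ and $b_1\le b_0$.
\end{itemize}
By independence of the last coordinate,
\[
\bP(A\cap B)=(1-p)\,\bP(A_0\cap B_0)+p\,\bP(A_1\cap B_1).
\]
Applying the induction hypothesis to each term gives
\[
\bP(A\cap B)\ge (1-p)a_0b_0+p\,a_1b_1.
\]
It remains to compare this with $\bP(A)\bP(B)=\bigl((1-p)a_0+pa_1\bigr)\bigl((1-p)b_0+pb_1\bigr)$. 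Expanding, the difference is
\[
(1-p)a_0b_0+pa_1b_1-\bigl((1-p)a_0+pa_1\bigr)\bigl((1-p)b_0+pb_1\bigr)=p(1-p)(a_0-a_1)(b_0-b_1).
\]
Both factors $a_0-a_1$ and $b_0-b_1$ are nonnegative, so the difference is $\ge 0$. This is a one-variable Chebyshev sum inequality, and it completes the two-event case.

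\textbf{Extension to $m$ events.} We induct on $m$; the case $m=1$ is trivial. Set $A'=\bigcap_{i<m}A_i$. By Proposition \ref{prop}, $A'$ is decreasing. The two-event case then gives
\[
\bP(A'\cap A_m)\ge \bP(A')\,\bP(A_m),
\]
and the induction hypothesis bounds $\bP(A')$ below by $\prod_{i<m}\bP(A_i)$.

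The only step with real content is the inductive step for two events. Specifically, one must check that both sections are monotone in the same direction ($A_1\subseteq A_0$ and $B_1\subseteq B_0$), since that is exactly what makes the cross term $(a_0-a_1)(b_0-b_1)$ nonnegative. The rest is bookkeeping.
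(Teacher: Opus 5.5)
Your proof is correct, and your extension from two events to $m$ events (Proposition~\ref{prop} plus induction on $m$) is exactly the route the paper indicates. The only difference is that the paper cites Harris for the two-event case, whereas you prove it by the standard induction on the number of coordinates; there the identity $p(1-p)(a_0-a_1)(b_0-b_1)\ge 0$ is correctly derived from $A_1\subseteq A_0$ and $B_1\subseteq B_0$.
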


%%%%%%%%%%%%%%%%%%%%%%%%%%%%%%%%%%%%%%%%%%%%%%%%%%%%
\section{Proof of Theorem~\ref{thm:triangle}}
%%%%%%%%%%%%%%%%%%%%%%%%%%%%%%%%%%%%%%%%%%%%%%%%%%%%

To complete the proof of Theorem \ref{thm:triangle}, we first prove the following lemma, which is used to establish the upper bound in our main result.

\begin{lemma}\label{lem:linear3}
Let $\cH$ be a linear $3$-uniform hypergraph with vertex set $V(\cH)$ and edge set $E(\cH)$ where $|V(\cH)| = n$ and $|E(\cH)| = r$. Let $S\subseteq V(\cH)$
be formed by including each vertex independently with probability $p\in(0,1)$. Then
$$
\mathbb P(S \text{ is an independent set in } \cH)\le 1-p+p(1-p^2)^r.
$$
\end{lemma}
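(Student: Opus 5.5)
The plan is to argue by induction on $r$, deleting one edge at a time. Write $f(r)=1-p+p(1-p^2)^r$ and let $I(\cH)$ denote the event that $S$ is an independent set in $\cH$. For the base case $r=0$ we have $\mathbb P(I(\cH))=1=f(0)$. Note that the bound is attained by a linear ``sunflower'' of $r$ triples sharing one vertex, so no slack can be lost anywhere in the argument.

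For the induction step, fix an edge $e=\{a,b,c\}\in E(\cH)$ and set $\cH'=\cH-\{e\}$, which is still linear and has $r-1$ edges. Since $I(\cH)=I(\cH')\setminus\{a,b,c\in S\}$,
$$
\mathbb P(I(\cH))=\mathbb P(I(\cH'))-\mathbb P\bigl(I(\cH')\cap\{a,b,c\in S\}\bigr).
$$
The induction hypothesis gives $\mathbb P(I(\cH'))\le f(r-1)$. A direct computation gives $f(r-1)-f(r)=p^3(1-p^2)^{r-1}$. It therefore suffices to prove
$$
\mathbb P\bigl(I(\cH')\mid a,b,c\in S\bigr)\ge (1-p^2)^{r-1}.
$$

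To prove this inequality, condition on $a,b,c\in S$. The remaining vertices $V(\cH)\setminus\{a,b,c\}$ are still included independently with probability $p$, so we work in an independent Bernoulli space on those coordinates. By linearity, every edge $f\in E(\cH')$ meets $e$ in at most one vertex. There are two cases.
\begin{itemize}
\item If $f$ meets $e$, then $f\setminus e$ is a pair of vertices outside $\{a,b,c\}$. The condition ``$f\not\subseteq S$'' becomes the event that this pair is not contained in $S$, which has probability $1-p^2$.
\item If $f$ is disjoint from $e$, the condition is that the triple $f$ is not contained in $S$, which has probability $1-p^3\ge 1-p^2$.
\end{itemize}
Each of these events is decreasing. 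Conditionally on $a,b,c\in S$, the event $I(\cH')$ is exactly their intersection. Theorem~\ref{Har-ineq} then bounds the conditional probability below by the product of the individual probabilities, which is at least $(1-p^2)^{r-1}$. This closes the induction.

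The only delicate point is the conditioning step. One must check that linearity prevents any edge of $\cH'$ from containing two of $a,b,c$. Such an edge would collapse to a single-vertex constraint of probability $1-p$ and break the bound. One must also check that the reduced pairs avoid $\{a,b,c\}$, so that they really are events in the independent product space on the remaining vertices. Once this is verified, Harris's inequality applies in its favourable direction, giving a lower bound on the subtracted term.
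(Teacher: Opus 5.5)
Your proof is correct and follows essentially the same route as the paper. Both argue by induction on $r$: delete an edge $e_0$, write $\mathbb P(I(\cH))=\mathbb P(I(\cH'))-p^3\,\mathbb P\bigl(I(\cH')\mid x_0,y_0,z_0\in S\bigr)$, and lower-bound the conditional term by $(1-p^2)^{r-1}$ using linearity and Harris's inequality in the conditional product space. Your explicit check that the reduced constraints live on coordinates outside $\{a,b,c\}$, and your sunflower tightness remark, are slightly more careful extras.
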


\begin{proof}
We prove the result by using induction on the number of edges $r$.
% For  the case of $r=1$, let $e=\{x,y,z\}$ be the edge of $\cH$. It is clear that every subset $S\subseteq V(\cH)$ which does not contain at least one of $\{x,y,z\}$ is an independent set, and the corresponding probability equals $1-p^3$. Thus the result holds. 
For the case of $r = 0$, It is clear that the result holds. 

Next we may assume that $r\ge 1$, and the statement holds for $r-1$. 
 Let $\cH'= \cH -\{e_0\}$, where  $e_0=\{x_0,y_0,z_0\}$ is an edge in $ \cH$.
We  define two events as follow.
\begin{itemize}
\item Event $A$: the subset $S$  is an independent set in $\cH'$.
\item Event $B$: the subset $S$ contains the vertices $x_0,y_0,$ and $z_0$.
\end{itemize}
Note that $S$ is an independent set in $\cH$ if and only if the event $A$ occurs and the event $B$ does not occur. So we have
\begin{align}\label{S-ind}
\mathbb P(S \text{ is an independent set in } \cH)&=\mathbb P(A\cap \overline{B})\notag\\
&=\mathbb P(A)-\mathbb P(A\cap B)\notag\\
&=\mathbb P(A)-p^3\,\mathbb P(A\mid B).
\end{align}

\begin{claim}\label{cl:condtiona}
$\mathbb{P}(A\mid B)\ge (1-p^2)^{r-1}$.
\end{claim}

\begin{proof}
We identify each subset $S\subseteq V(H)=\{v_1,\dots,v_n\}$ with its characteristic vector
$\omega_S\in\Omega=\{0,1\}^n$, where
\[
\omega_S(v_i)=
\begin{cases}
1 & \text{if } v_i\in S,\\
0 & \text{if } v_i\notin S.
\end{cases}
\]
There is a partial ordering on the vectors of $\Omega$:  $\omega\le \omega'$ if for all $v_i$ we have 
$\omega(v_i)\le \omega'(v_i)$. On the level of sets this  means that $\omega_S\le \omega_{S'}$ if and only if $S\subseteq S'$. 
For each edge $e\in E(\cH')$, we define
\begin{itemize}
\item event $A_e$: the subset $S$  does not contain all three vertices of $e$.
\end{itemize}
For  $S'\subseteq S$, since $\omega_S \in A_e$   implies that $\omega_{S'} \in A_e$, it follows from the definition that $A_e$ is a decreasing event. Then by the definition of event $A$, we have 
\begin{align}\label{AAe}
A=\bigcap_{e\in E(\cH')}A_e.
\end{align}

We assert that $\mathbb P(A_e\mid B)\ge 1-p^2$ for all $e\in E(\cH')$. Since $\cH$ is linear, every edge $e \in E\left(\cH^{\prime}\right)$ satisfies $\left|e \cap e_0\right| \leq 1$, where $e_0=\{x_0,y_0,z_0\}$.  If $|e \cap e_0|=0$,  then we have 
$$
\mathbb{P}(\overline{A_e} \mid B)=p^3,
$$
and thus 
\begin{align*}
\mathbb{P}\left(A_e \mid B\right)=1-p^3 > 1-p^2,
\end{align*}
as desired. So we consider the case when $\left|e \cap e_0\right|=1$. Without loss of  generality, let $x_0$ be the common vertex of $e\cap e_0$. The event $B$ implies that  $x_0 \in S$, and the other two vertices of $e$ are still included independently with probability $p$. This implies that
\begin{align*}
\mathbb{P}(\overline{A_e} \mid B)=p^2, 
\end{align*}
and thus 
\begin{align*}
 \mathbb{P}\left(A_e \mid B\right)=1-p^2,
 \end{align*}
the assertion holds.

Condition on the event $B$, which by definition forces $x_0,y_0,z_0\in S$.
Under this conditioning, the inclusion of each vertex $v\in V\setminus\{x_0,y_0,z_0\}$ is still decided
independently with probability $p$. 
Thus, under the conditional distribution $\bP(\,\cdot\,\mid B)$, we may apply Harris's inequality (i.e., Theorem \ref{Har-ineq}) and equality~(\ref{AAe}) to derive 
\begin{align*}
\mathbb P(A\mid B)&=\mathbb P\Big(\bigcap_{e\in E(\mathcal H')}A_e \Big| B\Big)\\
&\ge \prod_{e\in E(\mathcal H')}\mathbb P(A_e\mid B)
\ge (1-p^2)^{r-1},
\end{align*}
which completes the proof of the claim.
\end{proof}

The induction hypothesis applied to $\cH'$ yields
$$
\mathbb P(A)\le 1-p+p(1-p^2)^{r-1}.
$$
Thus, by Claim~\ref{cl:condtiona} and (\ref{S-ind}),  we have 
\begin{align*}
\mathbb P(S \text{ is an independent set  in } \cH)&=\mathbb P(A)-p^3\,\mathbb P(A\mid B)\\
&\le \mathbb P(A)-p^3(1-p^2)^{r-1}\\
&\le 1-p+p(1-p^2)^{r-1}-p^3(1-p^2)^{r-1}\\
&=1-p+p(1-p^2)^r.
\end{align*}
This completes the proof of Lemma~\ref{lem:linear3}.
\end{proof}

\noindent \emph{Proof of Theorem~\ref{thm:triangle}.} 
We will complete the proof  by establishing the lower bound  
\begin{align*}
T(1)\ge1-p+p(1-p^2)^{\left\lfloor n/2\right\rfloor}
\end{align*}
 and the upper bound 
 \begin{align*}
 T(1)\le1-p+p(1-p^2)^{\left\lfloor n/2\right\rfloor}.
 \end{align*} 

We first show the lower bound. Let $K_{a,b}$ be a complete bipartite graph with the parts $A$ and $B$ such that $|A|=a=\left\lfloor n/2\right\rfloor$ and $|B|=b=\left\lceil n/2\right\rceil$. Let $G$ be the graph obtained from $K_{a,b}$ by adding a single edge $uv$ inside $B$. One can see that $|V(G)|=a+b=n$ and $|E(G)|=ab+1$.

We consider $\Phi_p(G)$ by splitting it into the cases $uv\notin E(G_p)$ and $uv\in E(G_p)$.

(a) For $uv\notin E(G_p)$,  we have that
$G_p$ is a bipartite graph, which implies that  $G_p$ is triangle-free. This occurs with probability
\begin{align}\label{p2}
\mathbb P\bigl(uv\notin E(G_p)\bigr)=1-p.
\end{align}

(b) For $uv\in E(G_p)$, we have that every triangle $T_k$ in $G$ has the vertex set $\{x_k,u,v\}$ and the edge set $\{uv,ux_k,vx_k\}$, where $x_k\in A$.
For any fixed $x_k\in A$, the edges $x_ku$ and $x_kv$ exist in $G_p$ independently with probability $p$. This means  that the probability of event that the triangle $T_k$ does not exist in $G_p$ is $1-p^2$. All these events are independent over the different vertices $x_k\in A$, where $1\le k\le a$. Hence
\begin{align}\label{p3}
\mathbb P\bigl(G_p \text{ is triangle-free}\mid uv\in E(G_p)\bigr)=(1-p^2)^a.
\end{align}
By (\ref{p2}) and (\ref{p3}), we have
\begin{align*}
\Phi_p(G)&=\mathbb P\bigl(uv\notin E(G_p)\bigr)+\mathbb P\bigl(uv\in E(G_p)\bigr)\mathbb P\bigl(G_p \text{ is triangle-free}\mid uv\in E(G_p)\bigr)\\
&=(1-p)+p(1-p^2)^a,
\end{align*}
which implies
$$
T(1)\ge \Phi_p(G)=(1-p)+p(1-p^2)^a.
$$

Next we prove the upper bound.
Let $G$ be an arbitrary graph with $|V(G)|=n$ and $|E(G)|=ab+1$, and let $t(G)$ denote the number of triangles in $G$.
We define a $3$-uniform hypergraph $ H$ based  on $G$ with 
\begin{itemize}
\item the vertex set $V(\cH)=E(G)$ 
\item and the edge set $E(\cH)$: each triangle of $G$ corresponds to a hyperedge  $\cH$ in composed from its three edges.
\end{itemize}
Since any two distinct triangles in $G$ share at most one edge,  
it follows that any two hyperedges in $\cH$ intersect in at most one vertex. Thus $\cH$ is  a linear $3$-uniform hypergraph.

Let $S:=E(G_p)$. Then $S\subseteq E(G)=V(\cH)$ is obtained by choosing each vertex
independently with probability $p$. Moreover, $G_p$ is triangle-free 
if and only if $S$ does not contain edges of $\cH$, i.e. $S$ is an independent set in $\cH$. Hence
$$
\Phi_p(G)=\mathbb P(S \text{ is an independent set in } \cH).
$$

Recall that  $\cH$ is a linear 3-uniform hypergraph with $|E(\cH)|=t(G)$. So by Theorem~\ref{lem:count} and Lemma~\ref{lem:linear3}, we have
\begin{align*}
\Phi_p(G)&=\mathbb P(S \text{ is an independent set in } \cH)\\
&\le 1-p+p(1-p^2)^{t(G)}\\
&\le 1-p+p(1-p^2)^a.
\end{align*}
This completes the proof of Theorem \ref{thm:triangle}. \qed

\section{Conclusions}
\label{sec:conc}
%%%%%%%%%%%%%%%%%%%%%%%%%%%%%%%%
In this paper, we introduce a probabilistic version of the graph supersaturation problem. Instead of forbidding a subgraph deterministically,  we consider $n$-vertex graphs with a prescribed number of edges and ask which ones maximize the probability that their random $p$-subgraph contains no copy of a given forbidden configuration. Our main result settles the triangle case at one edge above Mantel's bound and provides an exact formula for $T(1)$.

We close the paper by mentioning two open problems. Perhaps the most natural one is to determine $T(i)$ for general $i$, as follows.
\begin{problem}\label{prob:1}
    Determine $T(i)$ when $i > 1$.
\end{problem}

Our result for $i=1$ holds for every $p\in(0,1)$, whereas for $i>1$ we expect the extremal graphs to depend on $p$.
For example, when $(n,i)=(6,2)$, define three candidate graphs as follows. Let $G^1$ be obtained from $K_{3,3}$ by adding a $2$-edge star in one part, and let $G^2$ be obtained by adding one edge in each part, and let $G^3$ be obtained from $K_{2,4}$ by adding a $P_4$ inside the part of size $4$. Recall that for very small $p$, the probability that $G_p$ is triangle-free is maximized if and only if
the number of triangles in $G$ is minimized. Consequently, any extremal graph must be one of
$G^1,G^2,G^3$. However, $G^3$ is never extremal, since a direct computation shows that
\[
\mathbb{P}(G^2_{p}\text{ is triangle-free})
-\mathbb{P}(G^3_{p}\text{ is triangle-free})
>0.
\]
Hence it suffices to compare $G^1$ and $G^2$. Furthermore, a direct computation shows that
\begin{align*}
\mathbb{P}(G_{p}^1 \text{ is triangle-free})
&= (1-p)^2+2p(1-p)(1-p^2)^3+p^2(1-p+p(1-p)^2)^3\\
&=1-6p^3+9p^5+6p^6-14p^7-3p^8+12p^9-6p^{10}+p^{11},
\end{align*}
whereas
\begin{align*}
\mathbb{P}(G_{p}^2\text{ is triangle-free})
=&(1-p)^2+2p(1-p)(1-p^2)^3+p^2 (1-p^2)^2\bigl((1-p)^4\\
&+4p(1-p)^3+2p^2(1-p)^2\bigr)\\
=&1-6p^3+10p^5+2p^6-10p^7+4p^9-p^{10}.
\end{align*}
Moreover,
\[
\mathbb{P}(G_{p}^1\text{ is triangle-free})
-\mathbb{P}(G_{p}^2\text{ is triangle-free})
=-p^5(1-p)^3\,(p^3-2p^2-p+1).
\]
Let $p_0\in(0,1)$ be the  root of $p^3-2p^2-p+1=0$. 
Then for $0<p<p_0$,
\[
\bP \left(G^2_p \text{ is triangle-free}\right) > \bP \left(G^1_p \text{ is triangle-free}\right),
\]
 and the inequality reverses for $p_0<p<1$.

It is natural to pose a second question regarding the graphs that maximize the probability of the event that $G_p$ is $K_k$-free, where $K_k$ denotes the clique of order $k$. 	
\begin{problem}\label{prob:2}
    Determine 
    \begin{align*}
    \max\{\mathbb{P}(G_p \text{ is $K_k$-free } ): |V(G)|=n, |E(G)|=\mathrm{ex}(n, K_k) + i, i>0\},
    \end{align*}
    where ex$(n, K_k)$ is the maximum number of edges in an $n$-vertex graph that does not contain $K_k$ as a subgraph.
        \end{problem}

\medskip
%%%%%%%%%%%%%%%%%%%%%%%%%%%%%%%%
\paragraph{Funding statement}
%%%%%%%%%%%%%%%%%%%%%%%%%%%%%%%%

Yuhang Bai was supported by the National Natural Science Foundation of China (Nos.~12131013 and 12471334), China Scholarship Council (No. 202406290002), and  Shaanxi Fundamental Science Research Project for Mathematics and Physics (No. 22JSZ009). Zixuan Yang was supported by  National Natural Science Foundation of China (Nos.~12401464 and 12471334) and China Scholarship Council (No. 202406290243).

% ---------------- bib -------------------------------


\begin{thebibliography}{30}
\bibitem{Erd3} P. Erd\H{o}s, Some theorems on graphs,  \emph{Riveon Lematematika}, \textbf{9} (1955), 13-17.


\bibitem{Erd1} P. Erd\H{o}s, On a theorem of Rademacher-Tur\'{a}n,  \emph{Illinois J. Math.}, \textbf{6} (1962), 122-127.


\bibitem{Erd2} P. Erd\H{o}s,   On the number of complete subgraphs contained in certain graphs, \emph{Magy.
Tud. Acad. Mat. Kut. Int. Kozl}, \textbf{7} (1962), 459-474.


\bibitem{For}  C. M. Fortuin, P. W. Kasteleyn, and J. Ginibre, Correlation inequalities on some partially ordered sets, \emph{Comm. Math. Phys.}, \textbf{22} (1971), 89-103.




\bibitem{Har} T. Harris,  A lower bound for the critical probability in a certain percolation process, \emph{Math. Proc. Cambridge Philos. Soc. }, \textbf{1} (1960),  13-20.


\bibitem{Kat} G. O. H. Katona, G. Y. Katona, and Z. Katona, Most probably intersecting families
of subsets, \emph{Comb. Prob. Comput.}, \textbf{21} (2012), 219-227.



\bibitem{Liu} H. Liu, O. Pikhurko, and K. Staden, The exact minimum number of triangles in
graphs of given order and size,  \emph{Forum Math. Pi}, \textbf{8} (2020), e8, 144pp.


\bibitem{Lov}  L. Lov\'{a}sz and M. Simonovits, On the number of complete subgraphs of a graph ii. In Studies
in Pure Mathematics: To the Memory of Paul Tur\'{a}n, pages 459-495. Springer, 1983.



\bibitem{Man} W. Mantel,  Problem 28, soln. by H. Gouventak, W. Mantel, J. Teixeira de Mattes, F. Schuh and W.A. Wythoff, Wiskundige Opgaven,  (1907) 10:60-61.


\bibitem{Raz} A. Razborov, On the minimal density of triangles in graphs, \emph{Combin. Probab.
Comput.}, \textbf{17} (2008), 603-618.

\end{thebibliography}
\end{document}